\newtheorem{theorem}{Theorem}[section]
\newtheorem{lemma}[theorem]{Lemma}
\newcommand{\N}{\mathbb{N}}
\newproof{proof}{Proof}
\newproof{prooftheorem}{Proof of Theorem 1.1}
\begin{document}

\title{On the Growth of the Counting Function of Stanley Sequences}
\author{Richard A. Moy}
\ead{ramoy2@illinois.edu}
\address{University of Illinois, Urbana, IL 61801, USA}

\begin{abstract}
Given a finite set of nonnegative integers $A$ with no three-term arithmetic progressions, the \emph{Stanley sequence generated by} $A$, denoted as $S(A)$, is the infinite set created by beginning with $A$ and then greedily including strictly larger integers which do not introduce a three-term arithmetic progression in $S(A)$. Erd\H{o}s et al. asked whether the counting function, $S(A,x)$, of a Stanley sequence $S(A)$ satisfies $S(A,x)>x^{\frac{1}{2}-\epsilon}$ for every $\epsilon>0$ and $x>x_0(\epsilon,A)$. In this paper we answer this question in the affirmative; in fact, we prove the slightly stronger result that $S(A,x)\geq (\sqrt{2}-\epsilon)\sqrt{x}$ for $x\geq x_0(\epsilon, A)$.

\end{abstract}
\begin{keyword}
Progression-free sets; Greedy algorithm
\end{keyword}

\maketitle

\section{Introduction} Let $\N_0$ denote the set of nonnegative integers.
A subset of $\N_0$ is called \emph{$l$-free} if it contains no $l$-term
arithmetic progression. Given a finite 3-free set $A=\{ a_1,\dots, a_t\} \subset
\N_0$, $a_1<\dots<a_t$, the \emph{Stanley sequence generated by} $A$ is the infinite sequence $S(A)=\{a_1,a_2,a_3,\dots \}$ defined by the
following recursion. If $k\geq t$ and $a_1<\dots< a_k$ have been defined,
let $a_{k+1}$ be the smallest integer $a>a_k$ such that
$\{a_1,\dots,a_k\}\cup\{a\}$ is 3-free. Sequences of this type were
introduced by Odlyzko and Stanley \cite{OS} and further studied by
Erd\H os et al. \cite{E} who coined the term \emph{Stanley sequence}, and
posed several problems about the growth of such sequences. One of these
problems \cite[Problem 1, p.~123]{E} reads:

\begin{quote} Is it true that for every $\epsilon >0$ and every finite $A\subset{\N_0}$, the counting function $S(A,x)$
of $S(A)$ grows faster than $x^{\frac{1}{2}-\epsilon}$?
\end{quote}

In this paper we will answer this question in the affirmative in the following slightly stronger form:
\begin{theorem}\label{main}
Given a finite 3-free set $A\subset\N_0$, let $S(A)$ be the \emph{Stanley sequence generated by A} and $S(A,x)=|\{ s\in S(A): s\leq x\}|$ be its counting function. Then, for any $\epsilon>0$ and $x\geq x_0(\epsilon, A)$,
\[ S(A,x)\geq \left( \sqrt{2}-\epsilon \right) \sqrt{x}.\]
\end{theorem}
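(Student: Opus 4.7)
The plan is to derive a quadratic upper bound of the form $a_K\leq \tfrac{1}{2}K^2(1+o(1))$ on the $K$th element of $S(A)=\{a_1<a_2<\cdots\}$ and then invert this to a lower bound on the counting function. The sharp constant $\tfrac{1}{2}$ here is precisely what produces the $\sqrt{2}$ in the theorem.

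The key structural observation I would use is the following: for any $k\geq t$ and any integer $n$ with $a_k<n<a_{k+1}$, the greedy rule forces the existence of a $3$-term arithmetic progression inside $\{a_1,\dots,a_k\}\cup\{n\}$. Since the other two terms of this progression lie in $\{a_1,\dots,a_k\}$ and are therefore strictly less than $n$, the integer $n$ \emph{must be the largest} term of that progression. Hence $n$ has the shape
\[
n=2a_j-a_i,\qquad 1\leq i<j\leq k,
\]
so $n$ is neither the middle nor the smallest term of any blocking progression. This is the crucial observation; once it is in hand, the rest of the argument is bookkeeping.

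From this classification I would sum over the gaps. The telescoping identity
\[
\sum_{k=t}^{K-1}(a_{k+1}-a_k-1)\;=\;a_K-a_t-(K-t),
\]
combined with the fact that every integer in every such gap is forbidden and that each forbidden integer in $(a_t,a_K)$ is determined by some pair $(i,j)$ with $i<j\leq K-1$, yields
\[
a_K-a_t-(K-t)\;\leq\;\binom{K-1}{2},
\]
and hence $a_K\leq\tfrac{1}{2}K^2+O_A(K)$.

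To convert this into the counting-function bound, set $K=S(A,x)$, so that $a_K\leq x<a_{K+1}$, and apply the estimate to $a_{K+1}$: $x<a_{K+1}\leq\tfrac{1}{2}K^2\bigl(1+O(1/K)\bigr)$. Solving for $K$ and letting $x\to\infty$ gives $K\geq \sqrt{2x}\,(1-o(1))$, which produces $S(A,x)\geq(\sqrt{2}-\epsilon)\sqrt{x}$ for every $\epsilon>0$ and $x\geq x_0(\epsilon,A)$. The one step that is not routine is the structural observation in the second paragraph that a blocking $n$ must be the top of its progression; everything else is elementary counting and a short asymptotic estimate.
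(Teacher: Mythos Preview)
Your proof is correct and is essentially the same argument as the paper's: the paper's Lemma~2.1 is exactly your ``key structural observation'' that a skipped $n>\max A$ must be the top term $2a_j-a_i$ of a blocking progression, and its Lemmas~2.2--2.4 amount to your pair-counting and telescoping, yielding the same inequality $x\leq \tfrac{1}{2}S(A,x)(S(A,x)+1)+O_A(1)$. The only cosmetic difference is packaging---the paper works directly with the counting function $S(A,x)$ and an auxiliary function $H(S,n)$ counting blocking pairs, whereas you index by the element $a_K$ and sum over gaps---but the mathematics is identical.
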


\section{The Proof}
Given a set $S\subset \N_0$, we define $H(S,n)$ to be the number of three-term arithmetic progressions $s_1< s_2 < n$ with $s_1,s_2\in S$, i.e. \[ H(S,n)=\# \{ (s_1,s_2): s_1,s_2\in S, s_1<s_2, n=2s_2-s_1\} .\]
\begin{lemma}\label{zerolemma}
Suppose $A=\{a_1,\dots, a_t\}\subset \N_0$, $a_1< a_2< \dots < a_t$, is a finite 3-free set and $S(A)$ is the Stanley sequence generated by $A$. If $n>\max A$, then $H(S(A),n)=0$ if and only if $n\in S(A)$.
\end{lemma}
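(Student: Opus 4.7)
The plan is to prove the two directions separately by exploiting the two defining properties of $S(A)$: its 3-freeness and the greediness of its construction.

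For the forward direction, I will show the contrapositive: if $H(S(A),n) > 0$, then $n \notin S(A)$. Suppose there exist $s_1 < s_2$ in $S(A)$ with $n = 2s_2 - s_1$; since $n > \max A \geq s_1$ and $s_2 = (s_1+n)/2 < n$, the triple $s_1, s_2, n$ would be a genuine three-term arithmetic progression with $s_1, s_2 \in S(A)$ and $s_1 < s_2 < n$. If $n$ were also in $S(A)$, this would contradict the 3-freeness of $S(A)$, which follows immediately from the recursive definition.

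For the reverse direction, I will prove the contrapositive: if $n > \max A$ and $n \notin S(A)$, then $H(S(A),n) \geq 1$. The idea is to look at the moment in the greedy construction at which $n$ is skipped. Let $j$ be the largest index with $a_j < n$, so that $a_j < n < a_{j+1}$ (using that $n \notin S(A)$ and $n > \max A = a_t$ together with the fact that $S(A)$ is unbounded). By the recursive rule, $a_{j+1}$ is the smallest integer greater than $a_j$ such that $\{a_1,\dots,a_j,a_{j+1}\}$ is 3-free. Since $a_j < n < a_{j+1}$, the integer $n$ must have been rejected, meaning that $\{a_1,\dots,a_j\}\cup\{n\}$ contains a three-term arithmetic progression. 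As $\{a_1,\dots,a_j\}$ is itself 3-free, this new progression must involve $n$.

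The final step is a position analysis: all elements $a_i$ in question satisfy $a_i \leq a_j < n$, so $n$ cannot be the smallest or the middle term of the forbidden progression. Therefore $n$ is the largest term, giving indices $i < k \leq j$ with $n - a_k = a_k - a_i$, i.e.\ $n = 2a_k - a_i$ with $a_i < a_k < n$ both in $S(A)$. This witnesses $H(S(A),n) \geq 1$, completing the proof. I expect no substantial obstacle here; the lemma is essentially an unpacking of the greedy definition, and the only subtle point is the positional observation ruling out $n$ as the smaller two terms of the progression.
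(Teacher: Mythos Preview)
Your proof is correct and follows essentially the same approach as the paper's: 3-freeness of $S(A)$ gives one direction, and the greedy rejection at the step $a_j < n < a_{j+1}$ gives the other; your version is simply more explicit about why $n$ must be the largest term of the forbidden progression. One small slip: the remark ``$n > \max A \geq s_1$'' is not generally true (since $s_1$ may lie in $S(A)\setminus A$) and is also unnecessary, as $s_1 < s_2 < n$ follows directly from $s_1 < s_2$ and $n = 2s_2 - s_1$.
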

\begin{proof}
Clearly $H(S(A),n)=0$ if $n\in S(A)$. Now, we will prove the other direction. Suppose $n\notin S(A)$ and $n>\max A$.  Let $S(A)=\{a_1,a_2,a_3,\dots\}$ with $a_1<a_2<a_3<\dots$ and let $k$ be such that $a_k<n<a_{k+1}$. Since $n\notin S(A)$ and $n>\max A$, there must be indices $i<j\leq k$ such that $a_i,a_j,n$ form a three-term arithmetic progression. Hence, $H(S(A),n)\geq 1$.
\end{proof}
\begin{lemma}\label{lemma1} We have
\[\sum_{0\leq n\leq x}H(S(A),n) \leq \frac{S(A,x)(S(A,x)-1)}{2}.\]
\end{lemma}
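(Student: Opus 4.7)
The plan is to interpret the left-hand side as a count of ordered pairs and then show that this count is bounded by the total number of pairs drawn from $S(A)\cap[0,x]$.

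First I would unwind the definition: by the definition of $H(S(A),n)$, the sum $\sum_{0\le n\le x}H(S(A),n)$ equals the number of pairs $(s_1,s_2)$ with $s_1,s_2\in S(A)$, $s_1<s_2$, and $2s_2-s_1\le x$, since distinct pairs give rise to distinct values of $n=2s_2-s_1$ with each such $n\le x$ contributing exactly $H(S(A),n)$ pairs. Thus the sum is simply the cardinality of
\[P_x=\{(s_1,s_2): s_1,s_2\in S(A),\ s_1<s_2,\ 2s_2-s_1\le x\}.\]

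Next I would verify the key containment $P_x\subseteq\{(s_1,s_2): s_1,s_2\in S(A)\cap[0,x],\ s_1<s_2\}$. The point is that whenever $s_1<s_2$, we have $2s_2-s_1>s_2$, so the inequality $2s_2-s_1\le x$ forces $s_2\le x$ and hence also $s_1<s_2\le x$. This means both coordinates lie in $S(A)\cap[0,x]$, a set of cardinality $S(A,x)$.

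Finally, the number of ordered pairs $(s_1,s_2)$ with $s_1<s_2$ chosen from a set of size $S(A,x)$ is exactly $\binom{S(A,x)}{2}=\frac{S(A,x)(S(A,x)-1)}{2}$, giving the desired inequality. There is no real obstacle here beyond correctly identifying that the constraint $2s_2-s_1\le x$ automatically confines $s_2$ (and therefore $s_1$) to $[0,x]$; the rest is a direct double-counting argument.
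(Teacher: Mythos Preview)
Your proof is correct and follows essentially the same double-counting approach as the paper: both interpret the sum as counting pairs $(s_1,s_2)$ from $S(A)$ with $s_1<s_2$ and $2s_2-s_1\le x$, then bound this by $\binom{S(A,x)}{2}$ via the observation that $s_1<s_2\le 2s_2-s_1\le x$. One minor wording quibble: ``distinct pairs give rise to distinct values of $n$'' is not literally true (several pairs may share the same $n$), but what you need---and clearly intend---is simply that each pair determines a unique $n$, so summing $H(S(A),n)$ over $n$ counts each pair exactly once.
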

\begin{proof}
Let $\{a_1,\dots, a_k\}$ be the set of integers in $S(A)$ which are less than or equal to $x$.
Then
\begin{align*}
\sum_{0\leq n\leq x} {H(S(A),n)} &=\sum_{0\leq n\leq x} {\#\{ (i,j): i<j\leq k, n=2a_j-a_i\} }
\\
&\leq \#\{ (i,j): i<j\leq k\}
\\
&=\frac{k(k-1)}{2}
\\
&=\frac{S(A,x)(S(A,x)-1)}{2},
\end{align*}
since $S(A,x)=k$ by the definition of $k$.
\end{proof}

\begin{lemma}\label{lemma2} We have
\[
\sum_{\substack{{n\notin S(A)}\\ {0\leq n\leq x}}} 1 -\max A\leq \sum_{0\leq n\leq x}H(S(A),n).
\]
\end{lemma}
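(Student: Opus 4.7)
The plan is to split the range $0 \le n \le x$ at the threshold $\max A$ and handle the two pieces by different means. For $n > \max A$, Lemma \ref{zerolemma} is precisely tailored to the problem: it tells me that each $n \notin S(A)$ in this range contributes at least $1$ to $H(S(A),n)$, so summing $1 \le H(S(A),n)$ over such $n$ and then enlarging to include all $n$ in $[0,x]$ yields
\[
\sum_{\substack{n \notin S(A) \\ \max A < n \leq x}} 1 \;\le\; \sum_{\substack{n \notin S(A) \\ \max A < n \leq x}} H(S(A),n) \;\le\; \sum_{0 \le n \le x} H(S(A),n),
\]
since $H(S(A),n) \ge 0$ always.

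For the remaining piece $0 \le n \le \max A$, I cannot invoke Lemma \ref{zerolemma}, but the range itself contains only $\max A + 1$ integers. Since $A \subset S(A)$ accounts for $t \ge 1$ of them, the count of $n \notin S(A)$ in this range is at most $\max A + 1 - t \le \max A$. This is exactly the $-\max A$ slack built into the inequality; it absorbs the contribution of the initial segment where the zero-or-positive behavior of $H$ is not yet controlled.

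Adding the two bounds gives
\[
\sum_{\substack{n \notin S(A) \\ 0 \le n \le x}} 1 \;\le\; \max A + \sum_{0 \le n \le x} H(S(A),n),
\]
which, after rearranging, is the desired inequality. There is no real obstacle here; the only point to check carefully is the crude bound on the $[0,\max A]$ segment, and in particular that the ``$-\max A$'' correction term in the statement is exactly large enough (and not off by one). The identification of $\max A$ as the correct threshold comes directly from the hypothesis of Lemma \ref{zerolemma}, so the two lemmas dovetail cleanly.
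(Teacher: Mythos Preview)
Your proof is correct and follows essentially the same approach as the paper: both arguments split at $\max A$, invoke Lemma~\ref{zerolemma} to get $H(S(A),n)\ge 1$ for $n\notin S(A)$ above the threshold, and use the trivial bound $H\ge 0$ together with the observation that at most $\max A$ integers in $[0,\max A]$ lie outside $S(A)$. The paper's proof is simply a one-line compression of exactly the computation you wrote out.
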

\begin{proof}
This inequality holds because, by Lemma \ref{zerolemma}, $H(S(A),n)\geq 1$ for all $n\notin S(A)$ satisfying $n>\max A$, and $H(S(A),n)\geq 0$ otherwise.
\end{proof}

\begin{lemma}\label{lemma3}
We have
\[ x\leq \frac{S(A,x)(S(A,x)+1)}{2}+\max A.\]
\end{lemma}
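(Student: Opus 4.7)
The plan is to chain together Lemmas~\ref{lemma1} and~\ref{lemma2} and then rewrite the left-hand side of Lemma~\ref{lemma2} in a way that exposes $x$. Concretely, combining the two inequalities immediately gives
\[
\sum_{\substack{n\notin S(A)\\ 0\leq n\leq x}} 1 \;-\;\max A \;\leq\; \frac{S(A,x)(S(A,x)-1)}{2},
\]
so the only remaining task is to count the nonnegative integers up to $x$ that lie outside $S(A)$.

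For this counting step I would take $x$ to be a nonnegative integer (this suffices, since $S(A,x)=S(A,\lfloor x\rfloor)$ and the inequality to be proved is monotone in $x$ up to the usual floor adjustment). Then the nonnegative integers in $[0,x]$ number $x+1$, of which exactly $S(A,x)$ belong to $S(A)$, so
\[
\sum_{\substack{n\notin S(A)\\ 0\leq n\leq x}} 1 \;=\; (x+1)-S(A,x).
\]
Substituting this into the chained inequality yields
\[
x+1-S(A,x)-\max A \;\leq\; \frac{S(A,x)(S(A,x)-1)}{2},
\]
and a one-line algebraic rearrangement gives
\[
x \;\leq\; \frac{S(A,x)(S(A,x)-1)}{2} + S(A,x) + \max A - 1 \;=\; \frac{S(A,x)(S(A,x)+1)}{2} + \max A - 1,
\]
which is stronger than the claimed inequality by $1$.

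There is essentially no hard step here; the content is already packed into Lemmas~\ref{zerolemma}--\ref{lemma2}. The only minor bookkeeping issue to be careful about is the off-by-one coming from the count $x+1$ of nonnegative integers in $[0,x]$, and the implicit reduction to integer $x$; neither of these affects the final bound.
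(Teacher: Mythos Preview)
Your proof is correct and follows essentially the same route as the paper: chain Lemmas~\ref{lemma1} and~\ref{lemma2}, count the nonnegative integers in $[0,x]$ outside $S(A)$, and rearrange. The only cosmetic difference is that the paper handles real $x$ directly via $\lfloor x\rfloor+1\ge x$ (thereby forfeiting the extra $-1$ you obtain), whereas you reduce to integer $x$ first.
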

\begin{proof}
Combining the inequalities of Lemma \ref{lemma1} and Lemma \ref{lemma2} we obtain
\begin{equation*}\label{eqn2}
\sum_{\substack{{n\notin S(A)}\\{0\leq n\leq x}}} 1 -\max A \leq \frac{S(A,x)(S(A,x)-1)}{2}.
\end{equation*}
Since
\begin{align*}
\sum_{\substack{{n\notin S(A)}\\{0\leq n\leq x}}} 1 &=\lfloor x \rfloor +1-\sum_{\substack{{n\in S(A)}\\ 0\leq x\leq n}}1
\\
&\geq x-S(A,x),
\end{align*}
it follows that
\begin{equation*}
 x-S(A,x)\leq \frac{S(A,x)(S(A,x)-1)}{2}+\max A,
\end{equation*}
which implies the asserted inequality.
\end{proof}

\begin{prooftheorem}
We will proceed by contradiction. Suppose there exists $0<\epsilon<\sqrt{2}$ and some 3-free $A\subset\N_0$ such that $S(A,x)\leq (\sqrt{2}-\epsilon)\sqrt{x}$ for arbitrarily large values of $x$. At these values of $x$,
\begin{align*}
\frac{S(A,x)(S(A,x)+1)}{2} &\leq \frac{(\sqrt{2}-\epsilon)\sqrt{x}((\sqrt{2}-\epsilon)\sqrt{x}+1)}{2}
\\
&\leq\left(1-\frac{1}{\sqrt{2}}\epsilon \right)x+\sqrt{\frac{x}{2}}.
\end{align*}
By combining this with Lemma \ref{lemma3} we obtain
\begin{equation*}
x\leq \left(1-\frac{1}{\sqrt{2}}\epsilon \right)x+\sqrt{\frac{x}{2}}
\end{equation*}
for arbitrarily large values of $x$, a contradiction.
\end{prooftheorem}

\section{Concluding Remarks}
 As observed by Odlyzko and Stanley, for certain special sets A (for example, $A=\{0,3^v\}$ where $v$ is a positive integer), the Stanley sequences $S(A)$ can be explicitly described and are well understood. However, in general the behavior of these sequences seems to be rather chaotic and poorly understood.

 In their paper \cite{E}, Erd\H os et al. computed the first few hundred terms of the Stanley sequences for the sets $A=\{0,4\}, \{0,5\}, \{0,7\}, \{0,1,4\}, \{0,1,5\}$. On the basis of such data and heuristic arguments, they stated four problems about the growth of Stanley sequences. The problem resolved here, Problem 1, concerns the lower bound of the counting function $S(A,x)$; as far as we know, the other problems are still open. Problem 2 asks whether $S(A,x)\ll x^{\alpha +\epsilon}$ for some constant $\alpha<1$. This problem is still open, although an example of Odlyzko and Stanley shows that the exponent $\alpha$ cannot be smaller than $\frac{\log 2}{\log 3}$. Problem 4 asks whether there exists a set $A$ such that the terms $a_k$ of the associated Stanley sequences $S(A)$ satisfy $\lim_{k\rightarrow \infty}{(a_{k+1}-a_k)}=\infty$. This problem, too, remains open, though Savchev and Chan \cite{CS} recently resolved a related problem, Problem 6, which asks the same question for a more general class of sequences, the maximal 3-free sets.

 Stanley sequences generated by singleton sets $A=\{a\}$ are much better understood; see, for example, Gerver and Ramsey \cite{GR}.

 It would be interesting to generalize our results to ``$k$-free''
 Stanley sequences, $S_k(A)$, which are defined in the same way as $S(A)$
 but with the condition of being ``3-free'' replaced by being ``$k$-free.''

 \section{Acknowledgements}
 The author would like to extend thanks to Professor A. J. Hildebrand for his help with revising this paper and to Professor Bruce Reznick for suggesting the problem and guiding the author through the initial phase of the work.

\end{document}